\newcommand{\red}{\operatorname{red}}
\newcommand{\val}{\nu_\pi}
\newcommand{\smU}{U^0}
\newcommand{\supp}{\operatorname{supp}}
\newcommand{\ord}{\operatorname{ord}}
\newcommand{\Kbar}{\bar{K}}
\newcommand{\Xgrph}{\Gamma(X)}
\newcommand{\Qp}{\Q_p}
\newcommand{\Q}{\mathbb{Q}}
\newcommand{\PP}{{\mathbb{P}}}
\newcommand{\res}{\operatorname{Res}}
\newcommand{\dlog}{\operatorname{d\!\log}}
\newcommand{\pair}[1]{{\left\langle #1 \right\rangle}}
\newcommand{\gpair}[1]{\pair{#1}_{\gl}}
\renewcommand{\O}{\mathcal{O}}
\newcommand{\gl}{{\textup{gl}}}
\def\htt_#1{H_#1^\otimes}
\newcommand{\col}{\textup{Col}}
\def\acoln_#1{\O_{\col,#1}}
\def\Ocoln_#1{\Omega_{\col,#1}^1}
\newcommand{\XX}{\mathcal{X}}
\renewcommand{\O}{\mathcal{O}}
\newcommand{\Z}{\mathbb{Z}}
\newcommand{\gipair}[1]{\pair{#1}_{\gl,U_v}}
\newcommand{\Ft}{\tilde{F}}
\newtheorem{theorem}{Theorem}[section]
\newtheorem{lemma}[theorem]{Lemma}
\newtheorem{corollary}[theorem]{Corollary}
\theoremstyle{definition}
\numberwithin{equation}{section}
\begin{document}
\title{Vologodsky integration on curves with semi-stable reduction}
\author{Amnon Besser}
\address[Besser]{Department of Mathematics, Ben Gurion University, Be'er Sheva 84105, Israel}
\email{bessera@math.bgu.ac.il}

\author{Sarah Livia Zerbes}
\address[Zerbes]{Department of Mathematics \\
University College London\\
Gower Street, London WC1E 6BT, UK}
\email{s.zerbes@ucl.ac.uk}

\begin{abstract}
We prove that the Vologodsky integral of a mermorphic one-form on a curve over a $p$-adic field with semi-stable reduction restrict to Coleman integrals on the rigid subdomains reducing to the components of the smooth part of the special fiber and that on the connecting annuli the differences of these Coleman integrals form a harmonic cochain on the edges of the dual graph of the special fiber. This determines the Vologodsky integral completely. We analyze the behavior of the integral on the connecting annuli and we explain the results in the case of a Tate elliptic curve.
\end{abstract}
\subjclass[2010]{Primary 11S80, 11G20; Secondary 14G22, 14F40}

\maketitle

\section{Introduction}
\label{sec:intro}

Coleman integration~\cite{Col82,Col-de88} is a method for defining iterated integrals on certain $p$-adic rigid analytic domains with good reduction. Unlike the complex case, the resulting integral is single valued. Vologodsky integration~\cite{Vol01} also produces iterated integrals but on algebraic varieties over the same fields, without any assumption on the reduction. They are known to be the same in the good reduction case. Vologodsky integration of holomorphic forms (without iteration) was known before Vologodsky's work~\cite{Zar96,Colm96} using the logarithm on the Albanese variety and generalizations. It is now also sometimes called \emph{abelian integration}.

The existence of Vologodsky integration might be a bit surprising from the point of view of Coleman integration because one can sometimes glue an algebraic variety out of several domains with good reduction, in which case one can do Coleman integration on each domain and try to glue the integrals together. This in general produces a multi valued integral, hence not the Vologodsky integral.

Our work set out to try to clarify the relation between the two integration theories starting with the simplest possible non-trivial case, that of a curve with semi-stable reduction. In the general iterated case some progress has been made over the last few years but the project is far from finished. On the other hand, the case of abelian integration was fairly easy to handle. Since then, it has proved useful for various problems, especially in the work of the first named author. Thus, a need for an account for the proof of this special case has arisen, The present work provides such an account.

We recall the setup of~\cite[Section~\ref{sec:globsemi}]{Bes17}. Let $K$ be a finite extension of $\Qp$ with ring of integers $\O_K$ and residue field $k$. Let $X$ be a curve over $K$ which is the generic fiber of a proper $\O_K$ scheme $\XX$ with semi-stable reduction
\begin{equation}
  \label{eq:Yi}
  T = \cup_i T_i\;.
\end{equation}
In particular, locally near an intersection point $T_i\cap T_j$ there are coordinates $x,y$ satisfying
\begin{equation}
  xy=\pi\;,\; T_i=(x)\;,\;T_j=(y) \label{eq:ss}
\end{equation}
(here, $(f)$ denotes the divisor of the rational function $f$). For simplicity we will assume that components $T_i$ and $T_j$ intersect at at most one point. We can easily get to this by blowing up and the main theorem will apply without this assumption.

Let $\Xgrph$ be the dual graph of $T$ with vertices $V$ and edges $E$ (this is of course an abuse of notation as it really depends on the particular model). The vertices correspond to the components $T_v$ while the edges are ordered pairs of intersecting components $(T_v,T_w)$ oriented from $v$ to $w$, so that an edge $e$ has tail $e^+=v$ and head $e^-=w$.

The reduction map $X\to T$ allows us to split $X$ into rigid analytic domains $U_v = \red^{-1} T_v$ which are  wide open spaces in the sense of Coleman. These then intersect along annuli corresponding bijectively to the unoriented edges of $\Xgrph$. Indeed, in terms of the coordinates $x,y$ appearing in~\eqref{eq:ss} the annulus corresponding to the edge $(T_i,T_j)$ gets mapped via $x$ (or $y$) to the rigid analytic space $ A(|\pi|,1)$ with
\begin{equation}
  \label{eq:annulus}
  A(r,s) := \{z\in \Kbar\;,\; r<|z|<s\}\;.
\end{equation}
 An orientation of an annulus fixes a sign for the residue along this annulus and we match oriented edges with oriented annuli as in~\cite[Def.~\ref{convention1}]{Bes17}. We use the same notation for the edge and for the associated oriented annulus.

Fix a branch $\log$ of the $p$-adic logarithm. Let $\omega$ be a meromorphic form on $X$. The Vologodsky integral (with some choice of a constant of integration) $F$ of $\omega$ is a function $F:X(K) \to K$ (note that in~\cite{Vol01}, and also in~\cite{Ber07}, the integral is made to depend on a universal $\log$, which we substitute with our particular choice). On the other hand, we can use Coleman integration to define another such function as follows. Let $\smU$ be the inverse image, under the reduction map $\red$, of the smooth part of $T$. Choose Coleman integrals $F_v$ for $\omega$ on $U_v$ for each $v\in V$. The $F_v$ give a well defined function $\Ft: \smU \to K$ and, by abuse of notation, a function $\Ft: X(K) \to K$ because $K$ points always reduce to the smooth part (one needs to restrict the choices of constants of integration to get a function into $K$ but we will ignore this minor point for ease of exposition). This choice of $\Ft$ has $|V|$ degrees of freedom, which we would like to restrict to just one degree of freedom if we are to get a function comparable to $F$. To that end we observe that when associating
\begin{equation}
  c(e)=c_\omega(e)=F_\omega^{e^-}|_{e} - F_\omega^{e^+}|_{e}\label{eq:comega}
\end{equation}
the $c(e)$ are functions on the corresponding annuli, but they are constant because both Coleman integrals differentiate to $\omega$. As any cochain on $E$ decomposes uniquely into a harmonic cochain and an exact cochain \cite[Theorem~\ref{thmharmonic}]{Bes17} there is a unique, up to constant, way of choosing the $F_v$ in such a way that $c$ is harmonic. This therefore defines $\Ft$ uniquely up to a constant. The goal of this article is to prove the following result.
\begin{theorem}\label{mainsarah}
  The function $F-\Ft$ is constant on $X(K)$.
\end{theorem}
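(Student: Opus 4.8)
Since $\omega$ is integrated without iteration, Theorem~\ref{mainsarah} is a statement about \emph{abelian} integration, and the reason it should hold is structural: up to an additive constant $F$ is the pull-back under an Abel--Jacobi map of the $p$-adic logarithm (for the chosen branch) on the Jacobian $J$ of $X$ --- a generalized Jacobian if $\omega$ has poles --- and Raynaud's uniformization realizes $J$ rigid-analytically as a quotient of an extension of $\prod_v\mathrm{Jac}(T_v)$ by a torus with character lattice $H^1(\Xgrph,\Z)$; the logarithm on the abelian quotient is exactly what Coleman integration on the $U_v$ sees, while the toric part, governed by the chosen branch of $\log$ on $\mathbb{G}_m$, is dual to the harmonic cochains of \cite[Theorem~\ref{thmharmonic}]{Bes17}. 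To turn this into a proof I would argue locally on $X$. First, on each $\red^{-1}(T_v^{\mathrm{sm}})$, the part of $U_v$ lying over the smooth locus of $T_v$ where the semistable monodromy is frozen, the Vologodsky integral restricts to a Coleman integral of $\omega$: this is the localization to a good-reduction wide open of the known agreement of the two theories in the good reduction case. Hence $F=F_v+a_v$ there for a constant $a_v\in K$, so $F-\Ft$ equals $a_v$ on $\red^{-1}(T_v^{\mathrm{sm}})$; since $X(K)\subseteq\smU=\coprod_v\red^{-1}(T_v^{\mathrm{sm}})$, the function $F-\Ft$ is locally constant on $\smU$, and it remains to prove that $v\mapsto a_v$ is constant, equivalently that its coboundary vanishes on every edge of $\Xgrph$.

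For the second step, fix an oriented edge $e$ with $e^+=v$, $e^-=w$, and the corresponding annulus with coordinate $z$. There $F_v$ and $F_w$ restrict to Coleman functions with $F_w|_e-F_v|_e=c(e)$, whereas $F|_e$ differs from a Coleman function by a term \emph{linear} in the valuation $\val(z)$, say with slope $\lambda_e\in K$: this is the local form of Vologodsky's construction near a node, since the $p$-adic logarithm is single valued on the annulus, so the only residual multivaluedness of a naive primitive comes from the loops of $\Xgrph$ and is absorbed by a term $\lambda_e\val(z)$, the monodromy operator contributing only to first order. Because $F-F_v$ (resp.\ $F-F_w$) extends continuously to the constant $a_v$ (resp.\ $a_w$) across the $v$-end (resp.\ $w$-end) of $e$, comparing the two expressions for $F|_e$ obtained by continuing from the two ends yields a relation of the shape
\[
  a_{e^+}-a_{e^-}=c(e)+\lambda_e\,\ell(e),
\]
where $\ell(e)$ is the signed width of the annulus. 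In other words the cochain $e\mapsto c(e)+\lambda_e\ell(e)$ on the edges of $\Xgrph$ is exact, being the coboundary of $v\mapsto a_v$.

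The decisive step is that the monodromy cochain $e\mapsto\lambda_e\ell(e)$ is itself harmonic. Here the definition of the Vologodsky integral enters essentially: the slopes $\lambda_e$ are exactly those making $F$ single valued around every loop of $\Xgrph$, and via the semistable monodromy operator on $\hdr^1(X)$ this identifies $e\mapsto\lambda_e\ell(e)$ with a class in the weight-zero piece $H^1(\Xgrph)$, i.e.\ with a harmonic cochain in the sense of \cite[Theorem~\ref{thmharmonic}]{Bes17}. Granting this, $e\mapsto c(e)+\lambda_e\ell(e)$ is at once harmonic --- $c$ being harmonic by the very choice of the $F_v$ defining $\Ft$, and $\lambda_e\ell(e)$ by the above --- and exact, hence zero by the uniqueness of the harmonic/exact decomposition. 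Therefore $a_{e^+}=a_{e^-}$ for every edge, and as $\Xgrph$ is connected, $v\mapsto a_v$ is a single constant $a$, so $F-\Ft\equiv a$ on $\smU$, in particular on $X(K)$. When $\omega$ has poles one runs the same argument over the complement of its polar divisor with a generalized Jacobian in place of $J$, and nothing changes structurally. The main obstacle is this last step --- matching the $p$-adic monodromy/weight data of the semistable model with the combinatorial harmonicity on $\Xgrph$, and in particular verifying that Vologodsky's branch-corrected primitive contributes precisely the harmonic part --- while the localization of Vologodsky integration to $\red^{-1}(T_v^{\mathrm{sm}})$ used in the first step also needs to be carried out carefully.
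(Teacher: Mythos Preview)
Your outline is a genuinely different route from the paper's. The paper never analyzes $F$ on the annuli or introduces slope cochains; instead it reduces to showing $F(D)=\Ft(D)$ for principal $D=(f)$ and uses a reciprocity argument: the global pairing $\gpair{F,\log f}=0$ expresses $F(D)$ as a sum of residues, the analogous sum for $\Ft$ is computed via \cite[Theorem~\ref{globcupthm}]{Bes17} as $\sum_e c_\omega(e)\res_e\dlog f$, and the key lemma is that $e\mapsto\res_e\dlog f$ is the coboundary of $v\mapsto\ord_{T_v}f$, hence exact, so its pairing with the harmonic $c_\omega$ vanishes. This bypasses entirely the questions you raise about how $F$ looks on $U_v$ or on annuli.

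The difficulty with your approach is that the steps you flag as ``obstacles'' are not auxiliary lemmas but essentially the whole theorem. The local agreement of Vologodsky and Coleman integration on $\red^{-1}(T_v^{\mathrm{sm}})$ is singled out in the paper as a nontrivial consequence of the main result (``even this fact is not obvious''); it does not follow from the good-reduction comparison, which concerns complete varieties, not wide opens sitting inside a bad-reduction curve. Likewise the form $F=F_v+\lambda_e\val(z)$ on an annulus is exactly Corollary~\ref{interpolate}, deduced \emph{from} Theorem~\ref{mainsarah} via base change, not an input to it. Finally, granting those two steps, your relation $a_{e^+}-a_{e^-}=c(e)+\lambda_e\ell(e)$ shows $\lambda\ell$ differs from the harmonic $c$ by an exact cochain; proving $\lambda\ell$ harmonic is then equivalent to proving $a$ constant, which is the goal. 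Your appeal to the monodromy operator and the weight filtration is in the right spirit---indeed $\lambda$ turns out to equal $c_\omega$---but as written it is an assertion, not an argument, and making it precise would require unwinding Vologodsky's construction at a level of detail comparable to the paper's proof. So the outline is coherent and the structural picture (Raynaud uniformization, toric part dual to $H^1(\Xgrph)$) is correct, but the three gaps you identify are where all the content lies, and none of them is easier than the theorem itself.
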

In particular, Vologodsky integration is ``locally'' given by Coleman integration. Even this fact is not obvious. Note that the cochain $c_\omega$ of~\eqref{eq:comega} appearing in the construction of $\Ft$ is canonically associated with $\omega$. See~\cite[Prop.~\ref{creps}]{Bes17} for a cohomological interpretation of $c_\omega$.

By extending scalars, Vologodsky integration gives a function $F:X(\Kbar) \to \Kbar$. In particular, it is defined on points of the annuli $e\in E$. It clearly can not be defined there by the Coleman integrals $F_v$ because these do not even agree on the annuli. This, however,  does not contradict the result! In order to get the values of $F$ on points in the annuli one must make a ramified extension of $K$, say or ramification index $m$. Changing scalars to the integral model $\XX$ does not yield a semi-stable model and one needs to blow up the singular points of the special fiber. The resulting dual graph is obtained by taking $\Xgrph$ and subdividing each edge into $m$ edges, the additional vertices correspond, under the identification above with an annulus $A(|\pi|,1)$~\eqref{eq:annulus} with the subdomains $A(|\pi|^{(k-1)/m},(|\pi|^{(k+1)/m})$ for $k=1,\ldots,m-1$. As a harmonic cochain on such a graph  must give the same value to all the edges obtained from subdividing a single edge of $\Xgrph$ the following interesting Corollary follows easily.
\begin{corollary}\label{interpolate}
  Let $\omega$ be as in Theorem~\ref{mainsarah} and let $c_\omega$ be the associated harmonic cochain. Let $e$ be an edge of $\Xgrph$ connecting vertices $v$ and $w$ where the corresponding annulus, still denoted $e$, is isomorphic via a coordinate $x$ to $A(|\pi|,1)$ Then, on $e(\Kbar)$ we have
  \begin{equation*}
    F=F_v+ c_\omega(e)\cdot \val\circ x
  \end{equation*}
  with $\val$ the valuation normalized so that $\val(\pi)=1$.
\end{corollary}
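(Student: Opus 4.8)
The plan is to reduce the corollary to Theorem~\ref{mainsarah} over a suitable ramified extension and then track the constants of integration through the refinement of the dual graph.

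Fix $P\in e(\Kbar)$. Since $x(P)\in\Kbar$ the number $t:=(\val\circ x)(P)$ is rational, and, $P$ lying in the open annulus $e\isom A(|\pi|,1)$, we have $0<t<1$. First I would choose a finite extension $L/K$ with $P\in X(L)$ whose ramification index $m$ over $K$ is a multiple of the denominator of $t$, so that $t=k/m$ with $0<k<m$. By base-change functoriality of Vologodsky integration, $F(P)$ is the value at $P$ of the Vologodsky integral of $\omega$ over $L$. Base-changing $\XX$ to $\O_L$ and resolving the singularities of the special fibre gives a (regular) semi-stable model $\XX'$ of $X\otimes L$ whose dual graph is $\Xgrph$ with each edge subdivided into $m$ edges; along the old edge $e$ the chain of components is $U_v=:T_0,T_1,\dots,T_m:=U_w$, and in the coordinate $x$ (oriented so that $\val\circ x\to 0$ towards $v$) a point reduces to a smooth point of $T_j$ precisely when $\val\circ x=j/m$. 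In particular no $L$-point of $X$ reduces to a node, and $P$ reduces to a smooth point of $T_k$; by Theorem~\ref{mainsarah} over $L$ we get $F=\Ft^L+\mathrm{const}$ on $X(L)$, where $\Ft^L$ is built from Coleman integrals $G_u$ for $\omega$ on the wide opens $\red^{-1}(T_u)$ ($u$ a vertex of the refined graph), normalised so that the induced cochain on its edges is harmonic, and $\Ft^L(P)=G_k(P)$.

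The crux is to identify the normalising constants. On the annulus $e$ every Coleman primitive of $\omega|_e$ has the shape $h(x)+a_{-1}\log x+(\mathrm{const})$, where $\omega|_e=(\sum_n a_nx^n)\,\dd x$, $h(x)=\sum_{n\ne-1}\frac{a_n}{n+1}\,x^{n+1}$, and the function $h(x)+a_{-1}\log x$ depends only on $\omega|_e$. Writing $\gamma_v$ for the constant of $F_v|_e$, where the $F_v$ are the harmonically normalised Coleman integrals defining $\Ft$ and $c_\omega$ over $K$, we have $c_\omega(e)=\gamma_w-\gamma_v$ by \eqref{eq:comega}. I would then produce the required normalisation over $L$ by hand: on the old wide opens keep $G=F_v$, and on the new wide open $\red^{-1}(T_j)$ along $e$ take the Coleman integral $G_j$ with $G_j|_e=h(x)+a_{-1}\log x+\gamma_v+\tfrac{j}{m}c_\omega(e)$, and likewise along every other edge of $\Xgrph$. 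These choices are consistent (at $j=m$ the constant is $\gamma_v+c_\omega(e)=\gamma_w$); the associated cochain on the refined graph takes the constant value $\tfrac1m c_\omega(e)$ on the $m$ sub-edges of $e$; and it is harmonic, since at a bivalent new vertex the two incident values agree and at an old vertex $v$ the divergence is $\tfrac1m\Div(c_\omega)(v)=0$. By the uniqueness of the harmonic–exact decomposition \cite[Theorem~\ref{thmharmonic}]{Bes17} this is the normalisation computing $\Ft^L$, whence, evaluating at $P$ and using $k/m=t$,
\[
 \Ft^L(P)=G_k(P)=h(x(P))+a_{-1}\log x(P)+\gamma_v+\tfrac{k}{m}c_\omega(e)=F_v(P)+c_\omega(e)\cdot(\val\circ x)(P).
\]
Feeding this into $F=\Ft^L+\mathrm{const}$ and matching constants as in Theorem~\ref{mainsarah} (the constants-of-integration point the paper systematically suppresses) gives the asserted identity on all of $e(\Kbar)$.

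The step I expect to demand the most care is the constant-tracking above: the key — and at first perhaps counter-intuitive — point is that the harmonic normalisation over $L$ assigns $c_\omega(e)/m$, not $c_\omega(e)$, to each sub-edge, and it is exactly this $1/m$ that makes $F$ interpolate linearly in the valuation with slope $c_\omega(e)$. It is forced by harmonicity on the refined graph together with the compatibility \eqref{eq:comega} of the Coleman integrals on the old wide opens $U_v,U_w$ across the whole subdivided chain, and it can be double-checked against the Tate-curve computation, where passing to a degree-$m$ extension replaces the $N$-gon special fibre by an $(mN)$-gon and divides the (constant) harmonic cochain by $m$. The remaining ingredients — base-change functoriality of Vologodsky integration, the existence of $\XX'$ with the subdivided dual graph, and the description of Coleman primitives on an annulus — are standard.
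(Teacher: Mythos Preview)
Your proposal is correct and follows essentially the same route the paper sketches just before the corollary: pass to a ramified extension of degree $m$, blow up to a semi-stable model whose dual graph is the $m$-fold subdivision of $\Xgrph$, and use that a harmonic cochain on the subdivision is constant along each subdivided edge, forcing the value $c_\omega(e)/m$ on each sub-edge and hence the linear interpolation in $\val\circ x$. The paper leaves the constant-tracking implicit (``follows easily''), whereas you spell it out by exhibiting the interpolated Coleman primitives $G_j$ and verifying harmonicity directly; this is exactly the computation the paper's one-line argument is pointing at.
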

The phenomenon of a linear factor in the valuation appearing in the formula for the abelian integral on an annulus was observed independently by Stoll~\cite{Sto16} and proved by him and also in~\cite{Ka-Ra-Zu16}.

After proving the main result in Section~\ref{main} we discuss in Section~\ref{Tate} the case of a Tate elliptic curve, to demonstrate how the Vologodsky integral of a holomorphic form on a proper curve is independent of the choice of the branch of the logarithm even though the Coleman integrals do depend on it.

We note the following about the logical dependence of this work with~\cite{Bes17}. The current work relies heavily on~\cite{Bes17}, in particular on Section~\ref{sec:globsemi}. In turn, the result here is used (only) in Section~\ref{sec:end} of~\cite{Bes17}. This is done so that while the two papers reference each other there is no danger of a cyclic argument.

We would like to thank Wies\l awa Nizio\l, Rebecca Bellovin and Jessica Fintzen for helpful discussions regarding this work. The first named author was supported by Israel Science Foundation grant No. 1517/13. He would like to thank the department of Mathematics at the Georgia Institute of Technology, where a substantial part of the work on this paper was done.

\section{Proof of the main theorem}
\label{main}

\begin{proof}
We begin by noting that we may prove the result over a finite extension. This is clear for an unramified extension and for a ramified extension one needs to reverse the  argument leading to Corollary~\ref{interpolate}. This argument also shows that the result applies without assuming components only intersect at one point. Next we note that the integral of a form $\omega=df$, where $f$ is a rational function on $X$, is just $f$ up to a constant and the integral of $\omega=df/f$ is $\log(f)$. As this is the case for Coleman integration as well we get that the Vologodsky integral is given by Coleman integrals on the $U_v$. Furthermore, for both of these types of forms $\omega$ we have $c_\omega=0$, which is harmonic, proving the result in these two cases.

As usual we evaluate functions on divisors $G(\sum n_i P_i) = \sum n_i G(P_i)$. It is clear that to show the result it suffices to prove that $F(D)=\Ft(D)$ for any divisor of degree $0$. By the above remarks is suffices to prove this under the assumption that $D$ splits as a sum of $K$ points and $\omega$ is regular on the support of $D$. Let  $\alpha(D)= F(D)-\Ft(D)$. We need to show that $\alpha(D)=0$ for any divisor of degree $0$. We claim that it suffices to show this for a principal $D$. Indeed, if this is the case, then $\alpha$ factors via the Jacobian $J$ and gives an additive map $\alpha: J(K) \to K$. But, as the derivative of both integrals with respect to any of the points in $D$ is the same, namely $\omega$, this map will be locally constant hence $0$.

Thus, let $D = (f)$ be the divisor of a rational function. As noted before, we assume that $\omega$ is non-singular on the support of $D$, and we further assume, by extending the field of definition, that the support $\supp(\omega)$ of (the divisor of) $\omega$ splits as a union of $K$-rational points.  Recall from~\cite[Section~\ref{sec:double}]{Bes17} the definitions of the local pairings on points and annuli, and their global versions (this is an easy digest of results of~\cite{Bes98b,Bes00})
Consider the global pairing
 \begin{equation*}
   \gpair{F,\log(f)}=\sum_x \pair{F,\log(f)}_x
\end{equation*}
which is $0$ by~\cite[Prop.~3.6]{Bes00}. By assumption, there are no common singular points to $F$ and $\log(f)$, and separating the sum into a sum on each type of singular points and using~\cite[Def.~\ref{doubdef}]{Bes17} gives
\begin{equation*}
  0 = F(D) -\sum_{x\in \supp(\omega)} \res_x \log(x) \omega
\end{equation*}
Consider now the similarly defined global pairing
$  \sum_x \pair{\Ft,\log(f)}_x$.
As $d \Ft = \omega$ we get as before
\begin{equation*}
  \sum_x \pair{\Ft,\log(f)}_x =  \Ft(D) -\sum_{x\in \supp(\omega)} \res_x \log(x) \omega\;.
\end{equation*}
It is thus clear that to show $F(D)=\Ft(D)$ it suffices to show that
\begin{equation*}
  \sum_x \pair{\Ft,\log(f)}_x = 0\;.
\end{equation*}
To this end we use the expression for the left hand side found in Theorem~\ref{globcupthm} of~\cite{Bes17}. For this we note that $\log(f)$ is a Coleman integral of $\dlog f$ on each $U_v$, giving a collection of such Coleman integrals with associated cochain $c_{\dlog f}=0$. This gives
\begin{align*}
  \sum_x \pair{\Ft,\log(f)}_x &= 
 \sum_v \gipair{\omega,\dlog f}
 +  \sum_{e\in E/\pm }\left( c_\omega(e) \res_e \dlog f  -c_{\dlog f}(e)\res_e \omega\right)\\
 &=\sum_v \gipair{\omega,\dlog f}
 +  \sum_{e\in E/\pm } c_\omega(e) \res_e \dlog f \;.
\end{align*}
with $\gipair{~,~}$ the global pairing on the $U_v$ and $\sum_{e\in E/\pm}$ denotes a sum over the unoriented edges of an expression that is independent of the orientation.
For each vertex $v$ we have $\gipair{F_,\log(f)}= 0$, this time by~\cite[Cor.~4.11]{Bes98b}. By assumption, the cochain $c_\omega$ is harmonic and by the following Lemma the cochain $e\to \res_e \dlog(f)$ is exact. Therefore, the sum $ \sum_{e\in E/\pm } c_\omega(e) \res_e \dlog f$ is $0$, finishing the proof.
\end{proof}
\begin{lemma}
  The cochain $e\to \res_e \dlog(f)$ is the boundary of $v\mapsto \ord_{T_v} f$, where the last expression means the multiplicity of the component $T_v$ in the divisor of $f$.
\end{lemma}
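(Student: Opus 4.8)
The plan is to compute $\res_e\dlog(f)$ by hand in the local coordinates at the node corresponding to $e$, and then to fix the sign using the orientation conventions of~\cite{Bes17}.

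\emph{Reduce to a local computation.} Since $\res_e\dlog(f)$ depends only on $\dlog(f)|_e$, and the annulus $e$ lies in the formal completion of $\XX$ along the node $P=T_{e^+}\cap T_{e^-}$, I would work with the coordinates $x,y$ of~\eqref{eq:ss}, so that $xy=\pi$, $x$ and $y$ are local equations for $T_{e^+}$ and $T_{e^-}$, and $e$ is identified with $A(|\pi|,1)$ via $x$ (after the harmless unramified base change that splits the node). In the situation in which the lemma is applied, $\Div(f)$ is a sum of $K$-points, which reduce into the smooth locus of $T$; so the horizontal part of $\Div(f)$ avoids $P$, and near $P$ one has $\Div(f)=a\,T_{e^+}+b\,T_{e^-}$ with $a=\ord_{T_{e^+}}f$ and $b=\ord_{T_{e^-}}f$. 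Hence $w:=f\,x^{-a}y^{-b}$ is a unit in $\O_{\XX,P}$, in particular an invertible analytic function on all of $e$.

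\emph{The computation.} From $xy=\pi$ we get $dx/x=-dy/y$, whence
\[
  \dlog(f)=a\,\frac{dx}{x}+b\,\frac{dy}{y}+\dlog(w)=(a-b)\,\frac{dx}{x}+\dlog(w).
\]
Now $\res_e(dx/x)=\pm1$, the sign being exactly the orientation datum fixed in~\cite[Def.~\ref{convention1}]{Bes17}, and $\res_e\dlog(w)=0$: a unit of $\O_K[[x,y]]/(xy-\pi)$ has constant term strictly dominating every other monomial of its Laurent expansion on $A(|\pi|,1)$, so $w$ has ``order'' $0$ on the annulus and $dw/w$ has vanishing residue. Thus $\res_e\dlog(f)=\pm(\ord_{T_{e^+}}f-\ord_{T_{e^-}}f)$.

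\emph{Fix the sign.} What remains — and this is the only point needing genuine care, the rest being the line above — is to check that the sign is the one making $e\mapsto\res_e\dlog(f)$ the boundary of $v\mapsto\ord_{T_v}f$ in the sense of~\cite{Bes17}: with the matching of oriented annuli and oriented edges of~\cite[Def.~\ref{convention1}]{Bes17}, and consistently with the convention $c(e)=F^{e^-}|_e-F^{e^+}|_e$ of~\eqref{eq:comega}, one should find $\res_e\dlog(f)=\ord_{T_{e^-}}f-\ord_{T_{e^+}}f$. As a consistency check, summing these residues over the edges at a fixed vertex $v$ and invoking the residue theorem on the wide open $U_v$ together with $\Div(f)\cdot T_v=0$ recovers the expected total, though that does not pin down the individual residues and so does not replace the local computation.
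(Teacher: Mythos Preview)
Your argument is correct, and the overall shape---localize at the node, strip off the monomial $x^{a}y^{b}$, and show the remaining factor contributes residue zero---matches the paper's. The difference lies in how that last step is handled. The paper does not pass to the completed local ring: it reduces by multiplicativity to a polynomial representative $P(x,y)$ (not a unit, merely free of $x$- and $y$-factors), substitutes $y=\pi/x$ to obtain a Laurent polynomial, and then runs a Newton-polygon argument together with Coleman's ``Cauchy Theorem''~\cite[Prop.~2.3]{Col89} to conclude that $\res_e\dlog P=0$; the hypothesis that $f$ has no zeros or poles on the annulus enters precisely as the slope condition on the polygon. Your route is shorter: by working in $\widehat{\O}_{\XX,P}\cong\O_K[[x,y]]/(xy-\pi)$ and using that the horizontal part of $\Div(f)$ misses the node, you force $w=f\,x^{-a}y^{-b}$ to be an honest unit there, so the dominant-constant-term observation gives $|w/w(0)-1|<1$ on $A(|\pi|,1)$ and $\dlog w$ is exact. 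The paper's version makes the role of the ``no zeros on the annulus'' hypothesis more visible, while yours avoids the external appeal to~\cite{Col89} at the cost of invoking the slightly stronger (but available) assumption that the support of $\Div(f)$ reduces into the smooth locus. Your explicit attention to the sign, which the paper leaves implicit in the phrase ``this is clear for both $f=x$ and $f=y$'', is also appropriate.
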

\begin{proof}
By assumption $f$ has no zeros or poles on any annulus. We work locally near a singular point of $T$ where we have, as in~\eqref{eq:ss} coordinates $x,y$ with $xy=\pi$ and $x$ and $y$ define the two components intersecting at the point. We want to compare the residue on the annulus $e=\{|\pi|<|x|< 1\} $ to the difference of the orders of $f$ on the two components. Because this is clear for both $f=x$ and $f=y$ we can assume that the the divisor of $f$ does not include either component This means
that it is the quotient of polynomials $P(x,y)/Q(x,y)$ where both $P$ and $Q$ have the same property, so it is enough to prove for $f=P(x,y)$. Replacing $y$ by $\pi/x$ we get a Laurent polynomial
$f=\sum_{i=-m}^n a_i x^i$ with the properties $a_i \in \O_K$ and $i+\val(a_i)\ge 0$. Not being divisible by either $x$ or $y$ in this ring means that we have an $i\ge 0$ with $\val(a_i)=0$ and an $i\le 0$ such that $\val(a_i)=-i$. In terms of the Newton polygon this means that it is above the graph of
\begin{equation*}
  h(t)=
  \begin{cases}
    -t & t\le 0\\
    0 & t\ge 0
  \end{cases}
\end{equation*}
and touches it both for some negative $t$ and some positive $t$. 
By assumption there are no zeros of $f$
on the annulus $e$, implying the Newton polygon has no slopes strictly between  $-1$ and $0$. It is easy to see that this implies that $\val(a_0)=0$. Thus, the number of non-negative slopes, which is the number of roots with non-positive valuation, is exactly $n$, which is the order of pole at $\infty$. It follows easily from Coleman's ``Cauchy's Theorem''~\cite[Prop.~2.3]{Col89} that the residue of $\dlog f$ on an annulus $e$ on $\PP^1$, for a rational function $f$ whose divisor is disjoint from $e$, equals degree of the part of $(f)$ inside, or outside the annulus. Thus, in our case this residue is $0$ as required.
\end{proof}

\section{The case of a Tate elliptic curve}
\label{Tate}

Let $K$ be as in the introduction and let $q\in K^\times$ with valuation $n=\val(q)>0$. The Tate elliptic curve associated to $q$ is $E_q$ with $K$ points $K^\times /q^{\Z}$. We will assume that $n\ge 3$ so that the reduction is of the type considered in most of the text. Let $\omega = dz/z$ be the invariant differential. Its Vologodsky integral is the logarithm for the Tate curve, i.e., the unique homomorphism $E_q(K) \to K$ with differential $\omega$. This is clearly induced by the branch $\log_q$ of the logarithm, the one that sends $q$ to $0$. As this is the unique branch that factors via $E_q(K)$.

Let us see how this is obtained from Theorem~\ref{mainsarah}. The dual graph of the reduction of $E_q$ is an $n$-gon. We identify the vertices with the elements of the additive group $\Z/n\Z$ and in turn with the set $\{0,\ldots,n-1\} $. The preimage under the reduction map $\red$ of the smooth part of the component $v\in\Z/n\Z $ is the space $\val(z)=v$, mapping isomorphically onto its image in $E_q$ (which is the same for $v$'s congruent modulo $n$), and the wide open space $U_v$ is the space $v-1< \val(z)<v+1$, again identified with its image. The annulus corresponding to the pair $(v,v+1) $ is given by $v<\val(z)<v+1 $.

 We now fix any branch $\log$ of the logarithm. The Coleman integral of $\omega$ on $U_v$ is $\log$ up to a constant. We can choose the integral to be  $F_v=\log$ on each $U_v$, $v=0,\ldots, n-1$, making the differences $c_\omega(v,v+1)=F_{v+1}-F_v= 0$ on the annulus $(v,v+1)$ for $v=0,\ldots, n-2$. However, the edge $(n-1,0)$ is different: It corresponds to the image of the annulus $n-1<\val(z)<n $, which is the same as the image of the annulus $-1<\val(z)<0$, the two being identified via multiplication by $q$. Thus, for such a $z$ we have $F_{n-1}(z) =\log(z)$ while $F_0(z) = \log(z/q) = \log(z)-\log(q)$ so $c_\omega(n-1,0)= -\log(q)$.

 For the $n$-gon a harmonic cochain is a constant function while a cochain $b$ is exact if and only if $\sum b(e)=0$. Thus the decomposition of the resulting $c_\omega$ into harmonic $+$ exact is such that the harmonic is the constant $-\log(q)/n$ and the exact is $d \gamma$, with $\gamma$, normalized so that $\gamma(0)=0$, has $\gamma(v)= \log(q)\cdot v/n$. To get the choices of Coleman integrals have differences forming a harmonic cochain we need to take on $\val(z)=v$
 \begin{equation*}
   \Ft_v(z) = F_v(z) - \gamma(v) = \log(z)-\frac{v\log(q)}{n} = \log(z)-\val(z) \frac{\log(q)}{n} = \log_q(n)
 \end{equation*}
so that the Vologodsky integral of $\omega$ is indeed $\log_q$.

\end{document}